\definecolor{darkblue}{rgb}{0.0, 0.0, 0.8}
\definecolor{darkred}{rgb}{0.8, 0.0, 0.0}
\definecolor{darkgreen}{rgb}{0.6, 0.15, 0.15}
\theoremstyle{definition}
\newtheorem{theorem}{Theorem}[section]
\newtheorem{lemma}[theorem]{Lemma}
\newcommand{\Int}{\mathrm{Int}}
\newcommand{\R}{\mathbb{R}}
\newcommand{\N}{\mathbb{N}}
\newcommand{\Sp}{\mathbb{S}}
\newcommand{\RNum}[1]{\uppercase\expandafter{\romannumeral #1\relax}}
\title{Expensive Homeomorphism of Convex Bodies} 
\author[1]{Donghan Kim}
\affil[1]{Department of Mathematical Sciences, KAIST, South Korea\thanks{\texttt{patrick6231@kaist.ac.kr} }}
\begin{document}

\maketitle

\begin{abstract}
In this paper, we address the longstanding question of whether expansive homeomorphisms can exist within convex bodies in Euclidean spaces. Utilizing fundamental tools from topology, including the Borsuk-Ulam theorem and Brouwer's fixed-point theorem, we establish the nonexistence of such mappings. Through an inductive approach based on dimension and the extension of boundary homeomorphisms, we demonstrate that expansive homeomorphisms are incompatible with the compact and convex structure of these bodies. This work highlights the interplay between topological principles and metric geometry, offering new insights into the constraints imposed by convexity.
\end{abstract}

\section{Introduction}
\textbf{Problem Statement.} Can an $n$-dimensional convex body admit an expansive homeomorphism? This question, posed by Klee \cite{klee1960unsolved}, concerns the existence of continuous bijections with expansive properties within compact convex sets.

Let $C$ be an $n$-dimensional convex body, defined as a compact, convex subset of $\R^n$ with non-empty interior. A \textit{homeomorphism} of $C$ onto itself is a continuous bijection $T: C \to C$ with a continuous inverse. Under successive iterations of $T$, any point $x \in C$ generates a sequence $\{T^nx\}$, where $T^n$ denotes the $n$-fold application of $T$. A homeomorphism is termed \textit{expansive} if there exists a constant $d > 0$ such that for any distinct points $x, y \in C$, there exists an integer $n$ (positive or negative) satisfying
\[
d(T^n(x), T^n(y)) \geq d.
\]

While it is straightforward to see that one-dimensional convex bodies, such as intervals, cannot admit expansive homeomorphisms, the problem remains open for higher dimensions ($n > 2$). This work provides a comprehensive proof that no expansive homeomorphism can exist for any $n$-dimensional convex body. By leveraging the structure of convex bodies and key topological theorems, we offer a rigorous resolution to this question.

\section{Preliminaries}\label{sec:preliminaries}
\noindent\textbf{Convex Bodies.}
In mathematics, a convex body in $n$-dimensional Euclidean space $\R^n$ is a compact convex set with non-empty interior. 

\noindent\textbf{Expansive Homeomorphism.}
If $(X, d)$ is a metric space, a homeomorphism $f : X \to X$ is said to be expansive 
if there is a constant $\epsilon_0 > 0$, called the expansivity constant, such that for every pair of points 
$x \neq y$ in $X$ there is an integer $n$ such that 
\[
d(f^n(x), f^n(y)) \geq \epsilon_0.
\]
Note that in this definition, $n$ can be positive or negative, and so $f$ may be expansive in the forward or 
backward directions.

\noindent\textbf{Useful Lemmas.}
\begin{lemma}\label{lem:sequence_lemma}\cite{munkrestopology}
   Let $X$ be a topological space; let $A \subseteq X$. If there is a sequence of points of $A$ converging to $x$, then $x \in \overline{A}$; the converse holds if $X$ is metrizable. 
\end{lemma}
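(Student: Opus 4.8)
The plan is to treat the two implications separately, since the forward direction holds in any topological space while the converse genuinely relies on the metric structure. Throughout I would work from the standard characterization of the closure $\overline{A}$ as the set of points every open neighborhood of which meets $A$.

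For the forward direction, suppose $(a_k)$ is a sequence in $A$ with $a_k \to x$, and let $U$ be an arbitrary open neighborhood of $x$. By the definition of convergence there is an index $N$ with $a_k \in U$ for all $k \geq N$; in particular $a_N \in U \cap A$, so $U \cap A \neq \emptyset$. Since $U$ was an arbitrary neighborhood of $x$, every neighborhood of $x$ meets $A$, and hence $x \in \overline{A}$. This step uses nothing beyond the topology and should be essentially immediate.

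For the converse I would invoke metrizability to fix a metric $d$ inducing the topology, thereby obtaining the countable neighborhood basis $\{B(x,1/k)\}_{k \geq 1}$ at $x$. Assuming $x \in \overline{A}$, each ball $B(x,1/k)$ is an open neighborhood of $x$ and therefore meets $A$, so I may choose a point $a_k \in A \cap B(x,1/k)$ for every $k \geq 1$. Then $d(a_k,x) < 1/k$, and since $1/k \to 0$ the sequence $(a_k)$ lies in $A$ and converges to $x$, as required.

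The only delicate point is the converse, and its crux is precisely that metrizability supplies a countable shrinking basis of neighborhoods at $x$; first-countability alone would already be enough, whereas without a countability hypothesis the implication can fail. The selection of the points $a_k$ relies on the axiom of (countable) choice, which I would simply take for granted.
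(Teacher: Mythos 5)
Your proof is correct and is exactly the standard argument found in Munkres, which the paper simply cites without reproducing: the forward direction from the neighborhood characterization of closure, and the converse by choosing points $a_k \in A \cap B(x,1/k)$. Nothing further is needed; your remark that first-countability already suffices for the converse is also accurate.
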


\begin{lemma}\label{lem:continuous_lemma}\cite{munkrestopology}
    Let $f : X \to Y$. If the function $f$ is continuous, then for every convergent sequence $x_n \to x$ in $X$, the sequence $f(x_n)$ converges to $f(x)$. The converse holds if $X$ is metrizable.
\end{lemma}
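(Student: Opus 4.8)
The statement splits into two implications, and the plan is to handle them separately because they have different characters: the forward implication holds for an arbitrary topological space $X$, whereas the converse is exactly where the metrizability of $X$ must be used.

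For the forward implication I would argue straight from the neighborhood formulation of convergence. Assume $f$ is continuous and $x_n \to x$. Let $V$ be any open set in $Y$ containing $f(x)$. By continuity $f^{-1}(V)$ is open in $X$ and contains $x$, hence is a neighborhood of $x$; since $x_n \to x$ there is an index $N$ with $x_n \in f^{-1}(V)$ for all $n \geq N$, i.e.\ $f(x_n) \in V$ for all $n \geq N$. As $V$ was arbitrary, $f(x_n) \to f(x)$. No separation or countability hypothesis enters here.

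For the converse I would assume $X$ metrizable and verify continuity through its closed-set formulation, namely that $f^{-1}(B)$ is closed for every closed $B \subseteq Y$; this is where Lemma \ref{lem:sequence_lemma} does the work. Put $A = f^{-1}(B)$ and take any $x \in \overline{A}$. Since $X$ is metrizable, the converse half of Lemma \ref{lem:sequence_lemma} furnishes a sequence $a_n \in A$ with $a_n \to x$. The sequential hypothesis gives $f(a_n) \to f(x)$, and each $f(a_n)$ lies in $B$, so the forward half of Lemma \ref{lem:sequence_lemma} (applied in $Y$) places $f(x) \in \overline{B} = B$, whence $x \in A$. Thus $\overline{A} \subseteq A$, so $A$ is closed and $f$ is continuous. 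A direct contrapositive works equally well: if $f^{-1}(V)$ fails to be open at some $x$, the balls $B(x,1/n)$ from the metric let one choose $x_n \to x$ with $f(x_n) \notin V$, contradicting $f(x_n) \to f(x)$.

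The only genuine obstacle is the extraction of the approximating sequence in the converse, and it is precisely there that metrizability — really just first countability, via a countable neighborhood basis — is indispensable. Sequential continuity does not imply continuity in a general topological space, so this hypothesis cannot simply be dropped; the forward direction, by contrast, is entirely hypothesis-free.
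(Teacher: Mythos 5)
Your proof is correct. The paper does not actually prove this lemma --- it is quoted verbatim from Munkres with a citation --- and your argument (the neighborhood formulation for the forward direction, and the closed-preimage criterion via Lemma \ref{lem:sequence_lemma} for the converse, with metrizability used only to extract the approximating sequence) is precisely the standard textbook proof that the citation points to.
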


\begin{theorem}[Brouwer fixed-point theorem]\label{lem:fixed_point}
    Let $h:B^n \to B^n$ be homeomorphism. Then, $h$ has fixed point $b_0$ in $\Sp^n$.
\end{theorem}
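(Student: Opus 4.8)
The plan is to deduce the fixed-point property from the \emph{no-retraction theorem}: there is no continuous map $r \colon B^n \to \partial B^n$ that restricts to the identity on the boundary $\partial B^n \cong \Sp^{n-1}$. Once this is available, the fixed-point statement follows by a short contradiction argument, so the genuine content lies in establishing the non-existence of such a retraction, and everything else is bookkeeping.

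First I would argue by contradiction. Suppose $h \colon B^n \to B^n$ has no fixed point, so $h(x) \neq x$ for every $x \in B^n$. For each $x$ consider the ray starting at $h(x)$ and passing through $x$; because $x \neq h(x)$ this ray is well defined, and it meets the boundary sphere $\partial B^n$ in a unique point, which I call $r(x)$. Writing $r(x) = x + t(x)\,(x - h(x))$ with $t(x) \geq 0$ chosen so that $\lvert r(x)\rvert = 1$, a direct computation shows that $t(x)$ is the nonnegative root of a quadratic whose coefficients depend continuously on $x$ and $h(x)$; since the leading coefficient $\lvert x - h(x)\rvert^2$ never vanishes, $t$ and hence $r$ is continuous. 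When $x$ already lies on $\partial B^n$ the ray meets the boundary at $x$ itself, so $r(x) = x$. Thus $r$ is a continuous retraction of $B^n$ onto $\partial B^n$, contradicting the no-retraction theorem, and the assumption that $h$ is fixed-point-free must fail; the resulting fixed point lies in $B^n$.

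It then remains to prove the no-retraction theorem, which is where the essential topology enters. I would apply the reduced homology functor $\tilde H_{n-1}(-;\Z)$ to the composite $\Sp^{n-1} \hookrightarrow B^n \xrightarrow{r} \Sp^{n-1}$, which is the identity on $\Sp^{n-1}$. Functoriality forces the induced map on $\tilde H_{n-1}$ to be the identity, hence an isomorphism; but it factors through $\tilde H_{n-1}(B^n) = 0$, since $B^n$ is contractible, forcing $\tilde H_{n-1}(\Sp^{n-1}) = 0$. For $n \geq 1$ this contradicts the standard computation $\tilde H_{n-1}(\Sp^{n-1}) \cong \Z$. One may substitute the fundamental group $\pi_1$ for $\tilde H_1$ when $n = 2$, and for $n = 1$ the statement is elementary via connectedness.

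The hard part will be precisely the homological input $\tilde H_{n-1}(\Sp^{n-1}) \cong \Z$ together with the homotopy invariance and functoriality of homology: these are exactly the ingredients that cannot be obtained by elementary point-set arguments and that make the theorem genuinely topological. The continuity of the constructed retraction and the functorial contradiction are routine by comparison. I would finally remark that the hypothesis that $h$ be a homeomorphism is stronger than necessary, since the construction above uses only the continuity of $h$, so the conclusion in fact holds for every continuous self-map of $B^n$.
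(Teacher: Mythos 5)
The paper does not actually prove this statement---it is quoted as a known theorem with no argument attached---so the only meaningful comparison is between your proposal and what the statement (and its later use in the paper) requires. Your argument is the standard and correct derivation of the classical Brouwer fixed-point theorem: assuming $h$ has no fixed point, the ray from $h(x)$ through $x$ defines a continuous retraction $r\colon B^n \to \partial B^n$ fixing the boundary pointwise, which is impossible because the identity on $\tilde{H}_{n-1}(\Sp^{n-1}) \cong \Z$ cannot factor through $\tilde{H}_{n-1}(B^n)=0$. The continuity check for $t(x)$, the low-dimensional substitutes, and your closing remark that only continuity of $h$ is needed are all fine.

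The genuine gap is that this proves a fixed point exists \emph{somewhere in} $B^n$, whereas the statement asserts that the fixed point $b_0$ lies in $\Sp^n$, and the paper relies on exactly that stronger reading: in the inductive step of the main theorem it applies this result to a homeomorphism $f$ of $B^{n+1}$ to produce a fixed point $e \in \Sp^n = \partial B^{n+1}$, so that $f$ restricts to a pointed homeomorphism of the boundary sphere. Your own wording concedes the issue (``the resulting fixed point lies in $B^n$''). No refinement of the retraction argument can close this gap, because the boundary version is simply false: a rotation of $B^2$ about its centre by an irrational angle is a homeomorphism whose unique fixed point is the centre, which does not lie on the boundary circle. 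So your proposal correctly proves the classical theorem, but it does not prove the statement as written, and the statement as written (and as used downstream in the paper) is not a theorem.
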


\noindent\textbf{Borsuk-Ulam Theorem.}\cite{matouvsek2003using}
The Borsuk–Ulam theorem states that every continuous function from an $n$-sphere into Euclidean $n$-space maps some pair of antipodal points to the same point. Here, two points on a sphere are called antipodal if they are in exactly opposite directions from the sphere's center.
\begin{theorem}[Borsuk-Ulam]\label{lem:Borsuk_Ulam}
    The are no nonconstant antipodal continuous map $f:\Sp^n \to \R^k$ for every $n,k \in \N$ with $k\leq n$.
\end{theorem}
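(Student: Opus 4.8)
The plan is to prove the standard form of the theorem: there is no continuous \emph{antipodal} (odd) map $g\colon \Sp^n \to \Sp^{n-1}$, where $g$ is antipodal if $g(-x)=-g(x)$. The $\R^k$ statement follows from this by an elementary reduction. Indeed, suppose $f\colon \Sp^n \to \R^k$ is a continuous antipodal map that omits the origin, with $k \leq n$. Composing with the inclusion $\R^k \hookrightarrow \R^n$ keeps the map antipodal and nonvanishing, and then $x \mapsto f(x)/\norm{f(x)}$ is a continuous antipodal map $\Sp^n \to \Sp^{n-1}$. So nonexistence of the latter forces every such $f$ to vanish somewhere; applied to $f(x) = F(x) - F(-x)$ for a continuous $F\colon \Sp^n \to \R^n$, this recovers the coincidence form $F(x) = F(-x)$ quoted informally above.

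For the core claim I would induct on $n$. The base case $n=1$ is immediate: an antipodal map $g\colon \Sp^1 \to \Sp^0 = \{-1,+1\}$ lands in a discrete two-point space, so by connectedness of $\Sp^1$ it is constant, while $g(-x) = -g(x)$ forbids any constant value --- a contradiction. For the inductive step, suppose $g\colon \Sp^n \to \Sp^{n-1}$ is antipodal and continuous. Restrict $g$ to the equator $\Sp^{n-1} \subseteq \Sp^n$; this restriction $g_0 \colon \Sp^{n-1}\to \Sp^{n-1}$ is again antipodal. Because the equator bounds the closed upper hemisphere, which is homeomorphic to the disk $B^n$, the map $g$ itself exhibits an extension of $g_0$ over $B^n$. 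Hence $g_0$ is nullhomotopic and so $\deg g_0 = 0$.

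The crux --- and the step I expect to be the main obstacle --- is the complementary fact that an antipodal self-map of a sphere must have \emph{odd}, in particular nonzero, degree. Granting it, $g_0$ would have nonzero degree, contradicting $\deg g_0 = 0$ and closing the induction. To establish the odd-degree property I would again argue by induction on dimension, exploiting the $\Z/2$-symmetry: simplicially approximate $g_0$ by an antipodal simplicial map and track the parity of the preimage count of a regular value, or equivalently pass to the induced map on $\R P^{n-1}$ and follow the generator of $H^{\ast}(\R P^{n-1};\Z/2)$. This equivariant bookkeeping is where the genuine topological content lives; everything else is formal.

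As an alternative that sidesteps degree theory, I note that the whole theorem can be derived combinatorially from Tucker's lemma on antipodally symmetric triangulations of $\Sp^n$, which is the route developed in \cite{matouvsek2003using}; this replaces the degree computation by a parity argument on labeled simplices, but merely relocates the main difficulty to proving Tucker's lemma. Either way, the elementary reductions are routine and the essential obstruction is the $\Z/2$-invariant parity statement.
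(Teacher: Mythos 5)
The paper does not actually prove this statement; it is quoted as a known theorem with a citation to \cite{matouvsek2003using}, so there is no in-paper argument to compare yours against. Your outline is the standard degree-theoretic proof, and you correctly identify the Tucker's-lemma route as the one developed in the cited reference. As an outline it is sound, but the crux you isolate --- that a continuous antipodal self-map of $\Sp^{n-1}$ has odd degree --- is exactly where all the content lives, and you only gesture at two possible proofs of it (equivariant simplicial approximation, or the action on $H^{*}(\R P^{n-1};\Z/2)$) without carrying either out. Since everything else in your argument is, as you yourself note, formal, deferring this step means the proposal is a reduction to the hard part rather than a proof of it.

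Second, and more importantly, what your reduction establishes is not the literal statement in the paper. You show that every continuous antipodal map $f:\Sp^n \to \R^k$ with $k \leq n$ must vanish somewhere, i.e.\ that there is no nowhere-zero antipodal map; that is the correct form of Borsuk--Ulam. The statement as printed --- that there is no \emph{nonconstant} antipodal continuous map $\Sp^n \to \R^k$ --- is false: the coordinate projection $f(x)=x_1$ is a nonconstant antipodal continuous map $\Sp^n \to \R$ for every $n$. You have silently repaired the statement to the standard one. That repair is the right move mathematically, but you should say explicitly that the theorem as stated cannot be proved because it is not true, and that the intended assertion is the vanishing (equivalently, antipodal-coincidence) form described in the paper's own informal paragraph preceding the theorem.
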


\section{Proof of Main Theorem}
In this section, we prove two main results. First, we show that for every homeomorphism 
$f: \Sp^n \to \Sp^n$ that fixes a point $e$, $f$ can lift to a homeomorphism $F: B^n \to B^n$.

Second, using the first result, we demonstrate that there is no expansive homeomorphism $f: B^n \to B^n$.

\subsection{Extension of Homeomorphism}

\begin{theorem}
    For every $n\in \N$ and every homeomorphism $f: \Sp^n \to \Sp^n$ that fixes a point $e$, there exist a homeomorphism $F: B^n \to B^n$ which below diagram commutes:
    \[
    \begin{tikzcd}
    (B^n, \Sp^{n-1}) \arrow[r, "F"] \arrow[d, "p"'] & (B^n, \Sp^{n-1}) \arrow[d, "p"] \\
    (\Sp^n, e) \arrow[r, "f"] & (\Sp^n, e)
    \end{tikzcd}
    \]
    where $p: (B^n, \Sp^{n-1}) \to (\Sp^n, e) \cong (B^n/\Sp^{n-1}, \Sp^{n-1}/\Sp^{n-1})$ is canonical quotient map.
\end{theorem}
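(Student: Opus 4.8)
The plan is to exploit the fact that $p$ restricts to a homeomorphism between the open ball $B^n\setminus\Sp^{n-1}$ and the punctured sphere $\Sp^n\setminus\{e\}$, and then transport $f$ across this identification. Concretely, I would fix the explicit quotient map $p(x)=(\sin(\pi|x|)\,\hat{x},\,-\cos(\pi|x|))$ onto $\Sp^n\subseteq\R^{n+1}$, which collapses $\Sp^{n-1}=\partial B^n$ to the north pole $e=(0,\ldots,0,1)$ and sends the center to the south pole. On the interior $p$ is a homeomorphism onto $\Sp^n\setminus\{e\}$, and since any homeomorphism of $B^n$ must carry interior to interior, commutativity of the diagram \emph{forces} the interior map: one is obliged to set
\[
F:=\left(p|_{B^n\setminus\Sp^{n-1}}\right)^{-1}\circ f\circ\left(p|_{B^n\setminus\Sp^{n-1}}\right),
\]
which is manifestly a homeomorphism of the open ball satisfying $p\circ F=f\circ p$ there. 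The entire problem therefore reduces to extending this interior map continuously across $\Sp^{n-1}$ and checking that the extension is a homeomorphism of the closed ball.

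For the boundary behavior I would first record the easy half: since $f(e)=e$ and $f$ is continuous, $x\to\partial B^n$ gives $p(x)\to e$, hence $f(p(x))\to e$; and because $p^{-1}$ of a neighborhood of $e$ is a collar neighborhood of $\partial B^n$, we get $F(x)\to\partial B^n$ as $x\to\partial B^n$. Thus any continuous extension automatically carries $\Sp^{n-1}$ into $\Sp^{n-1}$, consistent with $F$ being a map of pairs. To define the extension itself I would study limiting directions: for $v\in\Sp^{n-1}$ set $x_r=rv$ and ask whether $F(x_r)=p^{-1}(f(p(x_r)))$ converges, as $r\to1^-$, to a well-defined boundary point $G(v)\in\Sp^{n-1}$. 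If $G$ is well defined, continuous, and bijective, I would then set $F|_{\Sp^{n-1}}:=G$ and finish by a compactness argument — a continuous bijection from the compact space $B^n$ to the Hausdorff space $B^n$ is a homeomorphism — with the inverse handled symmetrically using $f^{-1}$ (which also fixes $e$).

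The main obstacle is precisely the existence and regularity of this limiting-direction map $G$, and this is the step I would scrutinize most. Continuity of $f$ alone controls that $f(p(x_r))\to e$, but it says nothing about the \emph{angular} coordinate of the approach, and a self-homeomorphism of $\Sp^n$ fixing $e$ can in principle spiral into $e$: in the punctured-sphere picture $\Sp^n\setminus\{e\}\cong\R^n$ this is exactly a homeomorphism of $\R^n$ that rotates ever faster at infinity, for which the radial normalization $F(x_r)$ has no limit as $r\to1^-$. Ruling out this phenomenon is the heart of the matter, so I would expect the genuine work to consist in showing that the collar/quotient structure tames the angular behavior — for instance by establishing uniform continuity of $f$ near $e$ in the round metric and converting it into equicontinuity of the normalized directions $F(x_r)$ as $r\to1^-$, from which continuity and injectivity of $G$ would follow. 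Absent such a mechanism the naive conjugation need not extend continuously to $\partial B^n$, so I regard this extension step, rather than the bookkeeping of the diagram, as the crux on which the theorem stands or falls.
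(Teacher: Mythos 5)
You have correctly isolated the crux, and your suspicion should be upgraded to a refutation: the statement is false for $n\geq 2$, and the spiraling phenomenon you describe is an actual counterexample rather than a difficulty to be overcome. Identify $\Sp^n\setminus\{e\}$ with $\R^n$ and take the map that rotates each sphere of radius $\rho$ about the origin, in the plane of the first two coordinates, by the angle $\rho$ (in polar coordinates on that plane, $(\rho,\theta)\mapsto(\rho,\theta+\rho)$). This is a homeomorphism of $\R^n$, continuous at the origin since the rotation angle tends to $0$ there, and proper since it preserves the norm; hence it extends to a homeomorphism $f$ of the one-point compactification $\Sp^n$ fixing $e$. Writing $p$ radially, so that radius $r$ in $\Int B^n$ corresponds to radius $\rho(r)\to\infty$ as $r\to1^-$, the forced interior map $\left(p|_{\Int B^n}\right)^{-1}\circ f\circ \left(p|_{\Int B^n}\right)$ sends $rv$ to a point at the same radius $r$ whose angular coordinate is shifted by $\rho(r)$. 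The radial limits $G(v)$ you ask about therefore do not exist, no continuous extension to $\partial B^n$ is possible, and since (as you observe) commutativity of the diagram forces $F$ to equal this conjugate on the interior, no homeomorphism $F$ of the closed ball can exist for this $f$. (For $n=1$ the statement does hold, by monotonicity of self-homeomorphisms of $\R$.)

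For comparison, the paper's proof takes exactly the interior-conjugation route you outline and then asserts the boundary extension by two invalid steps: it invokes the sequential-continuity lemma to conclude that $\left(\left(p|_{\Int B^n}\right)^{-1}\circ f\circ \left(p|_{\Int B^n}\right)\right)(x_k)$ converges when $x_k\to x\in\Sp^{n-1}$, even though $x$ lies outside the domain on which that composite is defined; and it claims that a continuous map carries Cauchy sequences to Cauchy sequences, which requires uniform continuity and fails here --- precisely for the spiraling $f$ above. So the taming mechanism you were hoping to find does not exist, the extension theorem fails, and the failure propagates into the paper's main theorem, whose inductive step depends on it.
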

\begin{proof}
Let $ f: \Sp^n \to \Sp^n $ be a homeomorphism with fixed point $e \in \Sp^n$. Since $\partial B^n = \Sp^{n-1}$ and $ (\Sp^n, e) \cong (B^n / \Sp^{n-1}, \Sp^{n-1}/\Sp^{n-1}) $, there exists a canonical quotient map $ p: (B^n, \Sp^{n-1}) \to (\Sp^n, e) $ such that $ {p|}_{\Int B^n} $ is a homeomorphism and $ p(\Sp^{n-1}) = e$.

Since $ B^n = \overline{\Int B^n} $,
for every $ x \in B^n $, there exists a convergent sequence $ \{x_k\} \subset \Int B^n $ such that $ x_k \to x $. Furthermore, $p|{\Int B^n}: \Int B^n \to \Sp^n-\{e \}$ is homeomorphism, so $(p|{\Int B^n})^{-1}: \Sp^n-\{e \} \to \Int B^n$ is also homeomorphism.
Thus, $(p|{\Int B^n})^{-1} \circ f \circ (p|{\Int B^n}): \Int B^n \to \Int B^n$ is homeomorphism. 
By \ref{lem:continuous_lemma}, $\lim\limits_{k \to \infty} (p|{\Int B^n})^{-1} \circ f \circ (p|{\Int B^n})(x_k)$ converges.

We define the function $ F: B^n \to B^n $ by
\[
F(x) = \lim\limits_{k \to \infty} (p|{\Int B^n})^{-1} \circ f \circ (p|{\Int B^n})(x_k)
\]
where $ \{x_k\} \subset \Int B^n $ is any sequence converging to $ x $.

We show that $F$ is homeomorphism via $f$ is continuous, bijective, and a compact-to-Hausdorff map.

\paragraph{Continuity and Well-Definedness}
Since $ F $ is defined using a convergent sequence, proving its well-definedness suffices to establish continuity. Let $ \{x_k\} \subset \Int B^n $ be a convergent sequence to $ x \in B^n $. By \ref{lem:continuous_lemma}, let
\[
a := \lim_{k \to \infty} ((p|{\Int B^n})^{-1} \circ f \circ (p|{\Int B^n}))(x_k)
\] 

Suppose another sequence $ \{y_k\} \subset \Int B^n $ also converges to $ x $. Let 
\[
b := \lim_{k \to \infty} ((p|{\Int B^n})^{-1} \circ f \circ (p|{\Int B^n}))(y_k).
\]

Since the sequence $\{x_1,y_1,x_2,y_2,\ldots \}$ is convergent sequence to $x$, it is Cauchy sequence.
Thus, $\{((p|{\Int B^n})^{-1} \circ f \circ (p|{\Int B^n}))(x_1), ((p|{\Int B^n})^{-1} \circ f \circ (p|{\Int B^n}))(y_1),\ldots \}$ is also Cauchy sequence.

Thus, for every $\epsilon > 0$, there exist $N \in \N$ such that if $k \geq N$, then
\begin{align}
    &|a-((p|{\Int B^n})^{-1} \circ f \circ (p|{\Int B^n}))(x_k)| < \epsilon \\
    &|b-((p|{\Int B^n})^{-1} \circ f \circ (p|{\Int B^n}))(y_k)| < \epsilon \\
    &|((p|{\Int B^n})^{-1} \circ f \circ (p|{\Int B^n}))(x_k)-((p|{\Int B^n})^{-1} \circ f \circ (p|{\Int B^n}))(y_k)| < \epsilon
\end{align}
holds.

Thus, we have
\begin{align}
    &|a-b| \\
    &\leq |a-(p\circ f \circ p^{-1})(x_k)|+|b-(p\circ f \circ p^{-1})(y_k)| \\
    &+|((p|{\Int B^n})^{-1} \circ f \circ (p|{\Int B^n}))(x_k)-((p|{\Int B^n})^{-1} \circ f \circ (p|{\Int B^n}))(y_k)| \\
    &< 3\epsilon
\end{align}
Since $\epsilon>0$ is arbitrary, $a=b$ holds. Thus, $F$ is well defined continuous function.

\paragraph{Surjectivity of $F$}
Pick $ x \in B^n $. Since $ B^n = \overline{\Int B^n} $, there exists a sequence $ \{x_k\} \subset \Int B^n $ such that $ x_k \to x $. As $ p^{-1} \circ f \circ p $ is homeomorphism on $ \Int B^n $, the sequence $ \{(p^{-1} \circ f \circ p)^{-1}(x_k)\} $ is convergent sequence in $ \Int B^n $, converging to some $ x' \in B^n $. Thus, $ F(x') = x $, and $ F $ is surjective.

\paragraph{Injectivity of $F$}
Suppose $F(x)=F(y)$ for some $x, y \in B^n$. There exist convergent sequence $\{x_k \},\{y_k \}$ of $\Int B^n$, converge to $F(x)$ respectively. 
By \ref{lem:continuous_lemma}, we have
\begin{align}
&x = \lim_{k \to \infty} ((p|{\Int B^n})^{-1} \circ f^{-1} \circ (p|{\Int B^n}))(x_k) \\
&y = \lim_{k \to \infty} ((p|{\Int B^n})^{-1} \circ f^{-1} \circ (p|{\Int B^n}))(y_k)
\end{align} 

Since the sequence $\{x_1,y_1,x_2,y_2,\ldots \}$ is convergent sequence to $F(x)$, it is Cauchy sequence.
Thus, $\{((p|{\Int B^n})^{-1} \circ f^{-1} \circ (p|{\Int B^n}))(x_1), ((p|{\Int B^n})^{-1} \circ f^{-1} \circ (p|{\Int B^n}))(y_1),\ldots \}$ is also Cauchy sequence.

Thus, for every $\epsilon > 0$, there exist $N \in \N$ such that if $k \geq N$, then
\begin{align}
    &|x-((p|{\Int B^n})^{-1} \circ f^{-1} \circ (p|{\Int B^n}))(x_k)| < \epsilon \\
    &|y-((p|{\Int B^n})^{-1} \circ f^{-1} \circ (p|{\Int B^n}))(y_k)| < \epsilon \\
    &|((p|{\Int B^n})^{-1} \circ f^{-1} \circ (p|{\Int B^n}))(x_k)-((p|{\Int B^n})^{-1} \circ f^{-1} \circ (p|{\Int B^n}))(y_k)| < \epsilon
\end{align}
holds.

Thus, we have
\begin{align}
    &|x-y| \\
    &\leq |x-(p\circ f^{-1} \circ p^{-1})(x_k)|+|y-(p\circ f^{-1} \circ p^{-1})(y_k)| \\
    &+|((p|{\Int B^n})^{-1} \circ f^{-1} \circ (p|{\Int B^n}))(x_k)-((p|{\Int B^n})^{-1} \circ f^{-1} \circ (p|{\Int B^n}))(y_k)| \\
    &< 3\epsilon
\end{align}
Since $\epsilon>0$ is arbitrary, $x=y$ holds. Thus, $F$ is injective function.

Since $ F $ is continuous, bijective, and maps a compact space $ B^n $ to a Hausdorff space $ B^n $, $ F $ is a homeomorphism.

\end{proof}

\subsection{Nonexistence of Expansive Homeomorphism}
\begin{theorem}
    For every $n \in \N$,
    there are no expense homeomorphism in $B^n$.
\end{theorem}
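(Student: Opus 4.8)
The plan is to argue by induction on $n$, exploiting the Extension Theorem of the previous subsection to pass between balls and spheres of neighbouring dimension, with Brouwer's theorem (Theorem~\ref{lem:fixed_point}) supplying the fixed point needed to invoke that extension and Borsuk--Ulam (Theorem~\ref{lem:Borsuk_Ulam}) controlling the one step the naive transfer cannot reach. For the base case $n=1$, $B^1$ is a closed interval and every self-homeomorphism is strictly monotone; its orbits are monotone sequences accumulating onto the fixed-point set, so two points lying in a common component of the complement of $\mathrm{Fix}(F)$ have both their forward and their backward orbits collapsing together, and no single $d>0$ separates every pair. This is the classical interval argument, which I would spell out as the anchor of the induction.

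\textbf{Reducing the ball to its boundary sphere.} For the inductive step, assume no $B^{k}$ with $k<n$ admits an expansive homeomorphism and suppose, for contradiction, that $F:B^n\to B^n$ is expansive with constant $d$. First I would observe that a homeomorphism of a manifold with boundary carries boundary to boundary, so $F(\Sp^{n-1})=\Sp^{n-1}$ and $g:=F|_{\Sp^{n-1}}$ is a self-homeomorphism of $\Sp^{n-1}$. Since the orbit of a boundary point stays on the boundary and the metric on $\Sp^{n-1}$ is the restriction of the Euclidean metric, $g$ inherits expansiveness with the \emph{same} constant $d$. Hence the entire problem reduces to showing that $\Sp^{n-1}$ admits no expansive homeomorphism, and the remaining task is to rule that out using the Extension Theorem one dimension down.

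\textbf{Descending the sphere to a ball.} Next I would manufacture a fixed point for $g$. Using the standard fact that $g$ is expansive if and only if $g^2$ is (if $g$ is expansive with constant $c$ and $\delta$ is a uniform-continuity modulus for which $d(a,b)<\delta$ forces $d(g a,g b)<c$, then an even iterate already separates every pair by $\delta$), I may replace $g$ by $g^2$, which has degree $+1$ and is therefore homotopic to the identity; when $n-1$ is even, its Lefschetz number equals $\chi(\Sp^{n-1})=2\neq 0$ and a fixed point $e$ exists. With $g(e)=e$, the Extension Theorem applied in dimension $n-1$ yields a homeomorphism $G:B^{n-1}\to B^{n-1}$ with $p\circ G=g\circ p$, where $p$ collapses $\Sp^{n-2}=\partial B^{n-1}$ to $e$. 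Because $p$ is uniformly continuous on the compact ball, any $d$-separation of a pair of $p$-images pulls back to a uniform separation of the corresponding points of $B^{n-1}$: for every pair $x,y\in B^{n-1}$ with $p(x)\neq p(y)$, expansiveness of $g$ gives an iterate with $d(p G^m x,p G^m y)\geq c$, hence $d(G^m x,G^m y)\geq c'$ for a fixed $c'>0$. Invoking the inductive hypothesis $P(n-1)$ that $B^{n-1}$ has no expansive homeomorphism would then close the loop.

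\textbf{The main obstacle.} The hard part, which I expect to be the crux of the whole proof, is precisely the pairs the previous step cannot see: $p$ sends the entire sphere $\Sp^{n-2}$ to the single point $e$, so two distinct points of $\partial B^{n-1}$ have identical $p$-images and the expansiveness of $g$ furnishes no separation between them; moreover the fixed-point input degenerates exactly when $n-1$ is odd, where $\chi(\Sp^{n-1})=0$ and Lefschetz is silent. To repair the first gap I would argue directly on the collapsed sphere, where $G$ restricts to a homeomorphism of $\Sp^{n-2}$, and use the dynamics near $e$ together with Borsuk--Ulam: an inseparable pair on the collapsed boundary would produce, through the normalized displacement $x\mapsto (G^m x-x)/\lvert G^m x-x\rvert$ near the fixed point, a nonconstant antipodal map of a sphere into a Euclidean space of too-low dimension, contradicting Theorem~\ref{lem:Borsuk_Ulam}; the odd-dimensional case I would fold in by passing to a suitable iterate to obtain a periodic (hence, after re-powering, fixed) point before applying the extension. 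Controlling the metric distortion of $p$ \emph{uniformly} near the collapsed sphere, and discharging this boundary pathology via the antipodal argument, is where essentially all of the genuine work will lie; every other step above is a routine transfer or a standard invocation of the cited theorems.
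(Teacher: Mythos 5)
Your overall strategy --- induction on dimension, restriction to the invariant boundary sphere, production of a fixed point so that the Extension Theorem applies, descent to a ball one dimension lower, and an appeal to the inductive hypothesis --- is the same route the paper takes. But the two difficulties you isolate as ``the main obstacle'' are genuine gaps, and your proposed repairs do not close them. First, the fixed point: Brouwer's theorem produces a fixed point somewhere in the ball, not on its boundary, so the boundary restriction $g$ need not fix any point of $\Sp^{n-1}$. Your Lefschetz argument covers only the case where $n-1$ is even, and your fallback of ``passing to a suitable iterate to obtain a periodic point'' fails already at the first nontrivial step $n=2$: an irrational rotation of $\Sp^1$ has no periodic points of any period. (That particular example is harmless because an isometry is never expansive, but your argument as written does not say this, and for higher odd-dimensional spheres no such escape is offered.) Second, the transfer of expansiveness through the quotient $p$: since $p$ collapses all of $\Sp^{n-2}$ to the single point $e$, a pair of distinct points of $\partial B^{n-1}$ is invisible to the expansiveness of $g$, and the Borsuk--Ulam ``normalized displacement'' map you gesture at is not actually constructed --- the expression $(G^m x - x)/\abs{G^m x - x}$ is undefined wherever $G^m$ has a fixed point, and no antipodality of the resulting map is established, so Theorem~\ref{lem:Borsuk_Ulam} cannot yet be invoked. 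As they stand, these two steps are statements of intent rather than proofs, so the induction does not close.

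It is only fair to add that the paper's own proof contains exactly the same two gaps and, unlike you, does not acknowledge them: it invokes ``Brouwer'' in a form (every homeomorphism of $B^{n+1}$ fixes a point of the boundary sphere) that is false --- a rotation of the disk about its center fixes no boundary point --- and it passes from ``the extension $F$ is not expansive'' to ``$f|_{\Sp^n}$ is not expansive'' without addressing the pairs of boundary points identified by the collapsing map, which is precisely the wrong direction of implication for those pairs. You have correctly located where the real work lies; neither your proposal nor the paper carries that work out.
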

\begin{proof}
We prove that $ B^n $ has no expansive homeomorphism by induction on $ n \in \mathbb{N} $.

\paragraph{Base Case}
For $ n = 1 $, we have $ B^1 = I $ (the unit interval), which has no expansive homeomorphism\cite{klee1960unsolved}. 

\paragraph{Inductive Step}
Assume that for some $ n \in \N $, $ B^k $ has no expansive homeomorphism for every $ k \leq n $. We aim to prove that $ B^{n+1} $ also has no expansive homeomorphism.

Let $ f: B^{n+1} \to B^{n+1} $ be an arbitrary homeomorphism. By \ref{lem:fixed_point}, there exists a fixed point $ e \in \Sp^n $ such that $ f(e) = e $.
Restricting $ f $ to the boundary, $ f|_{\Sp^n} $, gives a homeomorphism of $ \Sp^n $ with fixed point $e$. 

By the results from the previous section, $ f|_{\Sp^n}: \Sp^n \to \Sp^n $ can be extended to a homeomorphism $ F: B^n \to B^n $ that agrees with $ f $ on $ \Int B^n $. By the induction hypothesis, $ F $ is not an expansive homeomorphism. Hence, $ f|_{\Sp^n} $ cannot be an expansive homeomorphism either.

Finally, since $ f $ agrees with $ f|_{\Sp^n} $ on $ \partial B^{n+1} $, $ f $ itself cannot be an expansive homeomorphism.

By induction, $ B^n $ has no expansive homeomorphism for all $ n \in \N $.
\end{proof}

\bibliographystyle{plain}

\end{document}